\newtheorem{thm}{Theorem}
\newtheorem{problem}[thm]{Problem}
\newtheorem{remark}[thm]{Remark}
\newtheorem{theorem}[thm]{Theorem}
\newtheorem{definition}[thm]{Definiton}
\newcommand{\R}{\mathbb R}
\newcommand{\sph}{\mathbb S}
\newcommand\norm[1]{\left\Vert#1\right\Vert}
\newcommand\inner[1]{\langle#1\rangle}
\newcommand\minner[1]{\bigl\langle#1\bigr\rangle}
\newcommand\binner[1]{\left\langle#1\right\rangle}
\newcommand{\Fd}{\mathbf{F}}
\newcommand{\Ad}{\mathbf{A}}
\newcommand{\Hd}{\mathbf{H}}
\newcommand{\Gd}{\mathbf{G}}
\newcommand{\So}{{\mathbf S}}
\newcommand{\Mo}{{\mathbf M}}
\newcommand{\Uo}{{\mathbf U}}
\newcommand{\stepsize}{\omega}
\newcommand{\signal}{X}
\newcommand{\virt}{Q}
\newcommand{\signalaux}{\xi}
\newcommand{\dataaux}{\eta}
\newcommand{\noise}{\delta}
\newcommand{\data}{Y}
\newcommand{\density}{\operatorname{f}}
\newcommand{\radon}{\mathcal{R}}
\newcommand{\Ho}{\mathbf H}
\newcommand{\Qo}{\mathbf Q}
\newcommand{\lsq}{\mathcal{D}}
\newcommand{\mle}{\mathcal{L}}
\newcommand{\rr}{x}
\newcommand{\ee}{e}
\newcommand{\Nx}{N_x}
\newcommand{\Ny}{N_y}
\newcommand{\Nc}{M}
\newcommand{\Nd}{B}
\newcommand{\Ne}{E}
\numberwithin{equation}{section}
\numberwithin{thm}{section}
\numberwithin{figure}{section}
\title{Derivative-Free iterative One-Step Reconstruction for Multispectral CT}
\date{\today}
\author{Thomas Prohaszka}
\affil{Institute of Basic Sciences in Engineering Science, University of Innsbruck\authorcr
Technikerstrasse 13, 6020 Innsbruck, Austria\authorcr
E-mail:  \texttt{Thomas.Prohaszka@student.uibk.ac.at}
 }
\author{Lukas Neumann}
\affil{Institute of Basic Sciences in Engineering Science, University of Innsbruck\authorcr
Technikerstrasse 13, 6020 Innsbruck, Austria\authorcr
E-mail:  \texttt{lukas.neumann@uibk.ac.at}
 }
\author{Markus Haltmeier}
\affil{Department of Mathematics, University of Innsbruck\authorcr
Technikerstrasse 13, 6020 Innsbruck, Austria\authorcr
E-mail:  \texttt{markus.haltmeier@uibk.ac.at}
 }
\begin{document}

\maketitle

\begin{abstract} 
Image reconstruction in Multispectral Computed Tomography (MSCT) requires solving a challenging nonlinear inverse problem, commonly tackled via iterative optimization algorithms. Existing methods necessitate computing the derivative of the forward map and potentially its regularized inverse. In this work, we present a simple yet highly effective algorithm for MSCT image reconstruction, utilizing iterative update mechanisms that leverage the full forward model in the forward step and a derivative-free adjoint problem. Our approach demonstrates both fast convergence and superior performance compared to existing algorithms, making it an interesting candidate for future work. We also discuss further generalizations of our method and its combination with additional regularization and other data discrepancy terms.

\medskip\noindent \textbf{Keywords:}  
Inverse problems; coupled physics problems; multispectral CT, derivative-free iteratzions, inverse problems.    
\end{abstract}

\section{Introduction}

Classical computed tomography (CT) is based on the inversion of the linear Radon transform, where a scalar-valued attenuation map $\mu \colon \mathcal{X} \to \R $ of the patient is recovered from observation of its Radon transform $\radon \mu \colon \mathcal{L} \to \R$ derived from projection data. Here and below $\mathcal{X} \subseteq \R^d$ is the image domain in $d=2, 3$ dimensions, and $\mathcal{L}$ is a set of integration lines. While sufficient in many applications, the linear problem ignores the polychromatic nature of the X-rays and the energy-dependent absorption characteristics of real-world objects. The sample is more accurately represented by a family of attenuation maps $\mu(\ee) \colon \mathcal{X} \to \R$ dependent on the photon energy $\ee \in (0, \infty)$. Recovering a single $\mu$ from projection data using a single energy bin results in a mixture of density maps from different energies resulting in severe non-uniqueness. Additionally, the nonlinearity results in severe beam hardening artifacts that may be partially accounted for by iterative algorithms or analytic modeling \cite{mcdavid1975spectral,kiss2023beam,pan2008anniversary, herman1979correction,van2011iterative,rigaud2017analytical}. In order to overcome such weaknesses, the idea of multispectral CT (MSCT) is to measure projection data for different energy bands, which are then used to reconstruct multiple attenuation maps. The reconstruction problem, however, becomes nonlinear and much more challenging than pure Radon inversion \cite{kazantsev2018joint,rigie2015joint,hu2019nonlinear,arridge2021overview,Mory_2018,heismann2009quantitative,maass2009image}. In this work, we develop a simple and efficient strategy for tackling the nonlinear  MSCT image reconstruction problem.

\subsection{Multispectral CT}

Specifically,  in this work we use the material decomposition paradigm in MSCT. In the material decomposition approach  it is assumes that the energy dependent attenuation maps $\mu(\ee)$ can  be written as $\mu(\ee, \rr) = \sum_{m=1}^M  \mu_m(\ee)  \density_m(\rr)$ where   $\density_1, \density_2, \dots ,  \density_M \colon \mathcal{X}  \to \R$ are the densities  of $M$ separate materials to be recovered and $ \mu_m(\ee)$ are known and tabled absorption characteristic  of the $m$-th material.  By collecting  projection data for several energy bands  the aim  is to recover the  material densities.  This  does  not  only allow to improve image quality but also offers  abroad range  of the applications as it reconstructs multiple images encoding different characteristics  of specific regions enable a deeper understanding of the  objects  under examination.  The recent significant advancement  in the manufacturing of energy-sensitive sensors  \cite{willemink2018photon,kreisler2022photon} has considerably increased the interest in MSCT.

Assuming  $B$ spectral measurements $\data_1, \dots , \data_\Nd$, the   material  decomposition  problem  in MSCS can be written as the problem of recovering  $\density_1, \dots, \density_M$  from data  
 \begin{equation}\label{eq:IP}
	\data_\Nd \simeq    \int_0^\infty  s_b( \ee)  \exp  \bigl[-  \radon  \bigl( \sum_{m=1}^M  \mu_m(\ee)  \density_m(\rr)    \bigr)  \bigr] \; \mathrm{d} \ee \quad \text{ for } b = 1, 2, \dots,  B  \,.
\end{equation}
Here $\radon (\density_m)$  is the Radon transform applied to the $m$-th material density map $\density_m$,  $ \exp$ applies the exponential function pointwise and $s_b(\ee)$ represents the  energy profile (effective spectrum) of the  $b$-th measurement.  Classical CT  would  correspond to the  unrealistic case where $s_b$ is a Dirac delta  function and  where applying the pointwise logarithm to \eqref{eq:IP}  results  in a linear  inverse problem. In MSCT one accounts for the fact that  $s(b)$ has finite energy covering and thus one has to work with the  full nonlinear problem~\eqref{eq:IP} for recovering the unknown density maps $\density_m$.

\subsection{Two-step and one-step algorithms}

Various algorithms have been developed for solving the nonlinear inverse problem~\eqref{eq:IP}. They can be broadly classified into two categories: two-step methods and one-step algorithms. The idea of earlier two-step methods is to perform Radon inversion and material decomposition in two separate steps. Material decomposition can be performed either in the projection domain $\mathcal{L}$ (before Radon inversion) or in the image domain $\mathcal{X}$ (after Radon inversion). Both methods have their specific advantages and disadvantages. The image-domain decomposition approach allows incorporating prior information about the objects that is naturally contained in the image domain $\mathcal{X}$. However, the nonlinear nature of the problem leads to approximate linear models that introduce severe reconstruction artifacts. The image-domain decomposition approach, on the other hand, allows working with the correct nonlinear model. However, the prior structure in the Radon domain is not directly available. See the works \cite{heismann2009quantitative,maass2009image,schmidt2009optimal,niu2014iterative,schirra2013statistical} and references therein for various proposed two-step approaches.

One-step methods reconstruct the material densities $\density_1, \dots, \density_M$ through iterative minimization techniques for solving  \eqref{eq:IP} and thus overcome the drawbacks of both two-step methods. For some one-step algorithms, we refer to \cite{Mory_2018, cai2013full, long2014multi, mechlem2017joint, weidinger2016polychromatic, barber2016algorithm,chen2021non,kazantsev2018joint,rigie2015joint}. Despite their superior performance, such one-step iterative algorithms are computationally expensive. Existing methods require many iterative steps due to poor conditioning of the problem or come with computationally expensive iterative steps.   The algorithms proposed in this paper are specific one-step algorithms that address these two drawbacks of existing one-step methods.
                
\subsection{Our contributions}

As our main contribution, we present a novel derivative-free algorithm designed to combine the advantages of one-step and two-step approaches. To achieve this, we introduce a simple and computationally efficient iterative update that incorporates appropriate preconditioning. Image reconstruction is performed in the image domain, which naturally allows for the inclusion of an image smoothness prior. It also integrates benefits of two-step approaches by separating iterative updates into two parts. Moreover, the main ingredient that makes the algorithm efficient is the use of the full nonlinear forward model for the direct problem but linearisation around zero for the adjoint problem. While avoiding computation and evaluation of the derivative of the forward map, this also allows including a simple channel preconditioning. Our method can be combined with additional regularization. However, in order to show the method in its pure form, we will not include such a modification.

\section{Mathematical modelling of  MSCT}

We assume that the object to be imaged lies in some domain $\mathcal{X} \subseteq \R^2$ and consists of a combination of $M$ different materials with densities $\density_m \colon \mathcal{X} \to \R$ with $m = 1, \dots, M$.
Each material has a separate mass attenuation coefficient $\mu_m \colon [0, \infty) \to [0, \infty) $ which is a known function of the X-ray energy $\ee$.   The total energy dependent (linear) X-ray attenuation coefficient is then given by
\begin{equation}\label{eq:mu}
 	\mu(\rr ,\ee) = 
	\sum_{m=1}^{M} \mu_m(\ee) \density_m(\rr) \quad \text{ for } (\rr, \ee) \in \mathcal{X} \times \R \,.  
\end{equation}
Assuming that the material specific attenuation functions $\mu_m(\ee)$ are known, the goal is to recover densities $\density_m$ from indirect X-ray measurements using different energy bins, which we describe next.

\subsection{Continuous model}

We start with continuous modeling, where the quantities involved are functions on continuous domains that will be discretized later. Suppose that X-ray energy with a known incident spectral density $I_0(\ee)$ is sent along a line $\ell \in \mathcal{L}$ from the source position to the detector position.   While propagating along $\ell$, the X-rays are attenuated according to $\mu (\rr, \ee)$ defined in \eqref{eq:mu}. This results in an outgoing spectral density $I_1(\ee) = I_0(\ee) \exp ( - \int_{\ell} \mu(\rr,\ee) d\ell(\rr) )$ at the detector.   The energy sensitive detector with the spectral profile $\hat s (\ee)$ records the integral $\int_0^\infty \hat s (\ee) I_1(\ee) d\ee$.     Denoting the product of the incident spectral density of the source and detector sensitivity by $s(\ee) \triangleq I_0 (\ee) \hat s(\ee)$ , referred to as the effective spectrum, the recorded data is given by          
\begin{align} \nonumber
\data(s, \ell) 
& = \int_0^\infty s(\ee) \exp \left( - \int_{\ell} \mu(\rr,\ee) d\ell (\rr) ) \right) d \ee  
 \\ \label{eq:mess1}
 &= \int_\R s(\ee) \exp \left( - \sum_{m=1}^{M} \mu_m(\ee) \int_{\ell} \density_m d\ell (\rr) \right) d\ee
\,. 
\end{align}
The data in equation \eqref{eq:mess1} represent a single measurement in MSCT.  The goal of material decomposition in MSCT is to determine the density distributions $\density_m$ from multiple multispectral X-ray measurements by varying the line $\ell$ and the effective spectra $s$.

For simplicity  of presentation we consider the parallel beam mode where any line $\ell = \ell(\theta, r)$ is parametrized  by its normal vector $\theta$ and its distance $r$ from the origin.    In this case $\int_{\ell}  \density_m  d\ell (\rr)  =  \radon \density_m (\theta, r)$ is given by the classical Radon transform  of $\density_m$. Assuming further  a total number of $B$ different  effective spectra and writing $\density = (\density_1, \dots, \density_M)$ we obtain the  continuous   MSCT forward model    
\begin{equation}\label{eq:multispektrVO}
	\data   = \biggl[ \int_b s_b(\ee) \exp \Bigl( -   \sum_{m=1}^{M} \mu_m(\ee)  \radon ( \density_m )  \Bigr) d\ee \biggr]_{b =1, \dots B}  \,.
\end{equation}
Equation~\eqref{eq:multispektrVO} gives the complete continuous forward model in material decomposition in MSCT.  The unknown $\density$ consists of $M$ functions $\density_1, \dots, \density_M$ defined on the image domain $\mathcal{X}$ and the data of $B$ functions $\data_1, \dots, \data_\Nd$ defined  on the projection domain $\mathcal{L} = \sph^1 \times \R$.  The methods that we describe, however, would also work with a three-dimensional image domain $\mathcal{X}$ and a general projection domain $\mathcal{L}$  of lines in $\R^3$.

\begin{figure}
\includegraphics[width=0.9\textwidth]{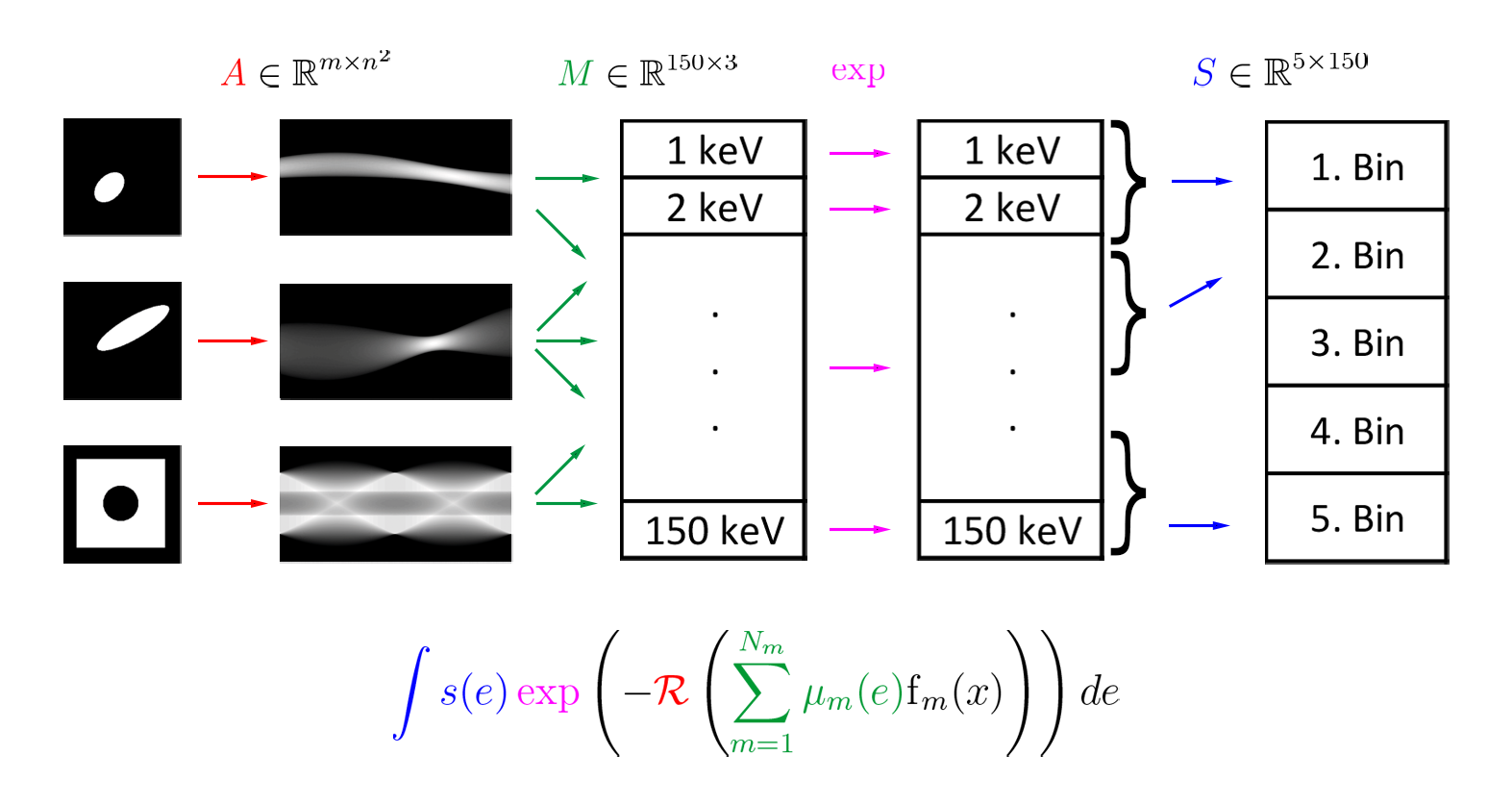}
\caption{ Illustration of the forward model in MSCT for $m=3$ materials and $b=5$ energy bins, using $E=150$ energy discretizations: First, the Radon transform is applied separately to each of the given material densities $\signal_1$, $\signal_2$, and $\signal_3$, resulting in three material sinograms, which can be seen as a three-channel sinogram. Next, the matrix $\Mo$ is applied to each pixel, resulting in 150 energy sinograms. To each of these sinograms, $x \mapsto \exp(-x)$ is applied, resulting in 150 virtual energy data maps. By applying the matrix $\So$ pixel by pixel, one obtains the final data consisting of data maps. The continuous forward model can be visualized in a similar way by replacing the material images with continuous counterparts and the 150 energy channels with a function-valued channel.}
\label{fig:forward}
\end{figure}

\subsection{Discretisation}

In order to avoid  technical  details and to concentrate  on the main  ideas we derive the algorithm for the   discrete forward model   throughout this paper.  For that  purpose we represent the material densities  via   discrete column vectors $ \signal_1, \dots ,  \signal_{\Nc}   \in \R^{\Nx}$ and the Radon transform via a matrix  $\Ad \in \R^{\Ny \times \Nx}$ where  $\Ny$ is the  number of lines used  in the projection domain. Further we discretise the effective energy spectra by vectors $\So_1, \dots, \So_B  \in \R^{\Ne}$ and the known material  attenuations by  vectors $\mu_1, \dots, \mu_{\Nc} \in \R^{\Ne}$. The discretization of \eqref{eq:multispektrVO} yields the following discrete image reconstruction problem.

\begin{problem}[Discrete  MSCT image reconstruction problem]
Recover the unknown  $\signal \in \R^{\Nx \times \Nc}$ from data  $\data = \Fd( \signal ) +  \noise \in \R^{\Ny \times  \Nd}$ where
\begin{equation} \label{eq:fwd}
  	\Fd( \signal )  \triangleq  (\So \cdot \boldsymbol{\exp}_{\Ne \times \Ny} (- \Mo  \cdot  ( \Ad \cdot  \signal)^\intercal ) )^\intercal  
  \end{equation}
Here and below we use the convention that the boldface notation $\boldsymbol{\exp}_{\Ne \times \Ny}$ indicates that the scalar function $\exp$ is applied pointwise to a vector  in $\R^{\Ne \times \Ny}$.   Further,  in \eqref{eq:fwd} , 
\begin{itemize}
\item the columns of $\signal = [\signal_1, \dots, \signal_M]$  are the discrete material images; 
\item $\Ad \in  \R^{\Ny \times \Nx}$ is the discretized Radon transform; 
\item the columns  of $\Mo  = [\Mo_1, \dots,  \Mo_{\Nc}] \in \R^{\Ne \times \Nc}$ are the discretized material attenuations;
\item the columns  of $\So^\intercal  = [\So_1^\intercal, \dots,  \So_\Nd^\intercal]  \in  \R^{\Ne \times \Nd}$  the discretized effective spectra;
\item the columns of $\data = [\data_1, \dots, \data_\Nd]$  are the observed spectral data.
\end{itemize}     
\end{problem}

Note  we repeatedly nclude the transpose operation $(\cdot)^\intercal$ in  \eqref{eq:fwd} such that all involved linear operations  can   be written as matrix multiplications  from  left. Alternatively we can also write $\Fd( \signal )    =  \boldsymbol{\exp}_{\Ny \times \Ne} (-   \Ad  \signal  \Mo^\intercal ) \So^\intercal$ reflecting  that the discrete Radon transform $\Ad$  operates on the columns and the matrices  $\Mo$ and  $\So$ on the rows of $\signal \in \R^{\Ny \times \Nd}$.  At some places we will denote operation  of $\Mo$  on  $\signal$  from the right  by  $\Mo \diamond \signal \triangleq \signal  \cdot   \Mo^\intercal$ such that we have $\Fd( \signal )    =  \So \diamond \boldsymbol{\exp}_{\Ny \times \Ne} (-   \Mo \diamond \Ad  \cdot \signal  )$ .  The structure  of the discrete forward  model   is  illustrated  in  Figure~\ref{fig:forward}.

\begin{remark}[Recalibration]
With  $ \inner{\So_b, \boldsymbol{1}}  = \sum_{e=1}^{\Ne} \So_{b,e}$ we obtain 
\begin{equation}  \label{eq:cal1}
	\Fd(0)^\intercal 
	= 
	\So  \cdot \boldsymbol{\exp}_{\Ne \times \Ny} ( 0 ) 
	=  
	\So  \cdot  \boldsymbol{1}_{\Ne \times \Ny} 
	= 
	\begin{pmatrix}
	 \inner{\boldsymbol{1}, \So_1}  & \cdots &  \inner{\boldsymbol{1}, \So_1} 
	 \\
	 \vdots  & & \vdots
	 \\
	 \inner{\boldsymbol{1}, \So_\Nd}   & \cdots  & \inner{\boldsymbol{1}, \So_\Nd}  
	\end{pmatrix} \,.
\end{equation}
Thus, with $\odot$  denoting  pointwise multiplication (also known as Hadamard product) and $\oslash $ the pointwise division we get 
\begin{multline} \label{eq:cal2}
	 \Fd(\signal)^\intercal \oslash \Fd(0)^\intercal
	= 
		\begin{pmatrix}
	 1/\inner{\boldsymbol{1}, \So_1}  & \cdots &  1/\inner{\boldsymbol{1}, \So_1} 
	 \\
	 \vdots  & & \vdots
	 \\
	 1/\inner{\So_b, \boldsymbol{1}}   & \cdots  & 1/\inner{\So_b, \boldsymbol{1}}  
	\end{pmatrix}
	 \odot ( \So \cdot \boldsymbol{\exp}_{\Ne \times \Ny} (- \Mo  \cdot  ( \Ad  \signal)^\intercal ) )
 	\\ = 
	\begin{pmatrix}
	\So_1 / \inner{\boldsymbol{1}, \So_1}   
	\\
	 \vdots 
	 \\
	\So_\Nd   / \inner{\boldsymbol{1}, \So_\Nd}   
	\end{pmatrix} \cdot \boldsymbol{\exp}_{\Ne \times \Ny} (- \Mo  \cdot  ( \Ad  \signal)^\intercal ) \,.
\end{multline}
This  means  that  the recalibrated forward model $ \Fd(\signal)^\intercal \oslash \Fd(0)^\intercal$ is the same as the original forward  model with  normalized  effective spectra $\So_b / \inner{\So_b, \boldsymbol{1}} $.   The matrix  with normalized spectra  can be written as  $\So  \oslash ( \So  \cdot  \boldsymbol{1}_{\Ne \times \Ne}  )$.    
 \end{remark}

In  this work we   use  rescaled data  to which we apply the pointwise logarithm $ \boldsymbol{\log}_{\Ny \times  \Nd}$ and the corresponding  least squares (LSQ) functional.  

\begin{definition}[Forward model and LSQ   functional]
We \label{def:operators} define  the logarithmic forward model  and the LSQ  functional   by    
\begin{align}  \label{eq:fwd-log}
  	&\Hd \colon \R^{\Nx \times \Nc} \to  \R^{\Ny \times  \Nd} 
	\colon \data \mapsto   \boldsymbol{\log}_{\Ny \times  \Nd}( \Fd(\signal) \oslash \Fd(0) )
	\\ \label{eq:lsq}
	&\lsq  \colon  \R^{\Nx \times \Nc} \to \R \colon 
	\signal \mapsto   \norm{\Hd( \signal) -  \data_{\Hd} }_2^2 / 2 \,.
\end{align}
Here  $\Fd( \signal )    =  \boldsymbol{\exp}_{\Ny \times \Ne} (-   \Ad  \signal  \Mo^\intercal ) \So^\intercal$ is  defined  in  \eqref{eq:fwd}, $\data \in   \R^{\Ny \times  \Nd} $ are the given  data, $\data_\Hd = \boldsymbol{\log}_{\Ny \times  \Nd}( \data \oslash \Fd(0)) $ the modified data, and $\boldsymbol{\log}_{\Ny \times  \Nd}(\cdot)$  the point-wise  logarithm.
\end{definition}

Using the  notations of Definition \ref{def:operators},  material decomposition in MSCT amounts to the near solution of  $\Hd( \signal) = \data_{\Hd}  $ or the near  minimization of  $\lsq$.  

\begin{remark}[Noise modelling] \label{rem:noise}
In the statistical context, LSQ minimization derives from maximum likelihood estimation using a Gaussian noise model on $\mathbf{\data}_{\Ho}$. From a statistical perspective, maximum likelihood estimation for Poisson noise on $\data$ might be more reasonable, resulting in $\mle(\signal) = \inner{\Fd( \signal), \boldsymbol{1}} - \inner{\boldsymbol{\log}(\Fd( \signal)), \data} \to \min_\signal$. As the focus in this paper is the derivation of an efficient reconstruction algorithm rather than statistical optimality, we work with $\lsq$. However, we expect that our strategy can also be applied to $\mle$ instead of $\lsq$.   
\end{remark}

\begin{remark}[Regularization] \label{rem:regularization} 
Due to the ill-conditioning of $\Hd( \signal) = \data_{\Hd} $, the reconstruction problem has to be regularized \cite{scherzer2009variational, benning2018modern}. In the context of LSQ minimization, a natural approach is  variational regularization, where one considers $ \norm{\Hd( \signal) - \data_{\Hd} }_2^2 / 2 + \alpha \mathcal{R}(\signal)$ instead  of $\lsq$ with a regularization functional $\mathcal{R}(\signal)$. Recently, in \cite{ebner2022plug}, the plug-and-play method has been identified as a regularization technique where the  regularization is incorporated  by a denoiser. Another  class  is given  by iterative regularization \cite{kaltenbacher2008iterative}, where regularization comes from early stopping. All these regularization methods require the gradient of $\lsq$, which we compute below.
\end{remark}

\section{Algorithm development}
\label{sec:algorithms}

In this section, we derive the proposed algorithms for MSCT based on channel preconditioning (CP). The first one (CP-full) integrates channel preconditioning into a gradient scheme. In the second algorithm (CP-fast), the derivative of the forward map is replaced by the derivative at zero. Both methods greatly reduce the number of iterations compared to standard gradient methods and the numerical cost per iteration compared to Newton type methods. Before presenting our algorithms, we start by computing derivatives and gradients and recall existing gradient  and Newton type methods.

\subsection{Derivatives computation}

Standard algorithms for minimizing \eqref{eq:lsq} require  derivative  of the  forward  map and the gradient of the LSQ functional  that we compute next. Recall the  original  and logarithmic MSCT forward operator $\Fd, \Hd \colon \R^{\Nx \times \Nc} \to  \R^{\Ny \times  \Nd}$  and the LSQ functional $\lsq$ defined by  \eqref{eq:fwd}, \eqref{eq:fwd-log} and \eqref{eq:lsq}.  We  equip   $\R^{\Nx \times \Nc}$ and $\R^{\Ny \times  \Nd}$ with the Hilbert space structure  induced by the standard inner product $\inner{\signalaux_1, \signalaux_2} = \sum_{i,j} \signalaux_1[i,j] \signalaux_2[i,j] $.   Further we use  $\Hd'[\signal] \colon \R^{\Nx \times \Nc} \to  \R^{\Ny \times  \Nd}$   to denote the derivative of $\Hd$ at  location $\signal \in \R^{\Nx \times \Nc} $ and $ \lsq' [\signal] \colon \R^{\Nx \times \Nc} \to \R$ and $\nabla \lsq [\signal] \in \R^{\Nx \times \Nc}$  to denote the derivative and the gradient  of  $\lsq$ at $\signal$, respectively.

\begin{remark}[Gradients,  inner products  and preconditioning]
By the definition of gradients, we have $\inner{ \nabla \lsq [\signal], \signalaux} = (\lsq'[\signal])^\ast(\signalaux)$, where $(\cdot)^\ast$ denotes the adjoint of a linear operator. Further, by the chain rule,  $\nabla \lsq [\signal] = (\Ho'[\signal])^ \cdot (\Ho (\signal) - \data_{\Hd} )$. Gradients and  adjoints depend on the chosen inner product. For example, the inner product $\inner{\signalaux_1, \Uo^{-1} \signalaux_2}$ on the image space with a positive-definite matrix $\Uo$ yields the modified gradient $\Uo \cdot \nabla \lsq [\signal]$. Choosing $\Uo$ such that $\Uo \cdot \nabla \lsq [\signal]$ has improved condition  significantly improves gradient based  methods for minimizing $\lsq$. \end{remark}

\begin{remark}[Some calculus rules] \label{rem:calculus}
For the following computation we use some  elementary calculus rules listed next.  Let $\Gd_1,\Gd_2 \colon \R^{\Nx} \to \R^{\Ny}$ be vector valued functions and  $\phi \colon \R \to \R$ a  scalar function with derivative $\psi \colon \R \to \R$. Then  for $\signal, \signalaux \in \R^{\Nx}$ we have   
\begin{align} \label{eq:calculus1}
	(\Gd_1  \odot \Gd_2)'[\signal] (\signalaux) 
	&= 
	 (\Gd_1'[\signal] (\signalaux)) \odot (\Gd_2(\signal)) 	+
	(\Gd_1(\signal))  \odot (\Gd'[\signal] (\signalaux))	
	\\ \label{eq:calculus2}
	\boldsymbol{\phi}_N' [\signal](\signalaux)
	&=
	\boldsymbol{\psi}_N(\signal) \odot \signalaux \,.
	\end{align}   
As usual we define the vector value functions $\boldsymbol{\phi}_N, \boldsymbol{\psi}_N  \colon \R^{\Nx} \to \R^{\Nx}$   by   pointwise application $\boldsymbol{\phi}_N(\signal) = (\phi(\signal_i))_i$   and $\boldsymbol{\psi}_N(\signal) = (\psi(\signal_i))_i$.
\end{remark}

We have  the following explicit expressions for derivatives, adjoints  and gradients.

\begin{theorem}[Derivatives computation] \label{thm:gradients}
Let  $\Fd, \Hd \colon \R^{\Nx \times \Nc} \to  \R^{B \times L}$  and  $\lsq$ be defined by~\eqref{eq:fwd}, \eqref{eq:fwd-log}, \eqref{eq:lsq}.  The the derivative of  $\Fd$, $\Hd$, the  adjoint and the gradient of  $\lsq$ are given  by 
\begin{align}   \label{eq:DF}
\Fd'[\signal] (\signalaux)^\intercal 
& =   -
 \So \cdot 
 \bigl( \virt_\signal \odot 
 (\Mo (\Ad \signalaux)^\intercal ) 
\\
 \label{eq:DH}
\Ho'[\signal] (\signalaux)^\intercal 
& = -\Fd(\signal)^{-T} \odot 
 (  
 \So \cdot 
 ( \virt_\signal \odot 
 (\Mo (\Ad \signalaux)^\intercal ) 
 )
 )
\\ \label{eq:DF-T}
\Fd'[\signal]^* (\dataaux) 
& 
=
-  \Ad^\intercal \cdot  (\Mo^\intercal  \cdot (\virt_\signal \odot (\So^\intercal  \dataaux^\intercal ))  )^\intercal
\\ \label{eq:DH-T}
\Ho'[\signal]^* (\dataaux)
& = -    \Ad^\intercal \cdot  (\Mo^\intercal  \cdot (\virt_\signal \odot (\So^\intercal (\Fd(\signal)^{-T} \odot \dataaux^\intercal) ))  )^\intercal
\\ \label{eq:nablaH}
\nabla \lsq [\signal] 
&= 
-  \Ad^\intercal \cdot   (\Mo^\intercal  \cdot (\virt_\signal \odot (\So^\intercal (\Fd(\signal)^{-T} \odot (\Hd (\signal) -  \data_{\Hd} )^\intercal) ))  )^\intercal
\,,
\end{align}
where $\virt_\signal \triangleq \boldsymbol{\exp}_{\Ne \times \Ny}(- \Mo (\Ad \signal)^\intercal)$ denote  virtual  spectrally resolved  data.  
\end{theorem}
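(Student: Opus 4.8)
The plan is to obtain the five formulas in the stated order: first compute the two forward derivatives \eqref{eq:DF} and \eqref{eq:DH} directly from the chain and product rules of Remark~\ref{rem:calculus}, then transpose each of these into its adjoint \eqref{eq:DF-T} and \eqref{eq:DH-T} by moving every elementary linear operation onto the second slot of the Frobenius inner product, and finally read off the gradient \eqref{eq:nablaH} from the adjoint of $\Hd'$ via the chain-rule identity $\nabla \lsq[\signal] = \Hd'[\signal]^*(\Hd(\signal) - \data_{\Hd})$ recalled earlier. Throughout I use that the standard inner product is transpose invariant, $\inner{A, B} = \inner{A^\intercal, B^\intercal}$, so that everything may be computed with the transposed maps appearing on the left-hand sides of \eqref{eq:DF} and \eqref{eq:DH}.

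For the derivative of $\Fd$ I read $\Fd(\signal)^\intercal = \So \cdot \boldsymbol{\exp}_{\Ne \times \Ny}(-\Mo (\Ad \signal)^\intercal)$ as a composition of three maps: the linear map $\signal \mapsto -\Mo(\Ad\signal)^\intercal$, the pointwise exponential, and left multiplication by $\So$. The inner map is linear, hence its own derivative, sending $\signalaux$ to $-\Mo(\Ad\signalaux)^\intercal$; the pointwise chain rule \eqref{eq:calculus2} with $\phi = \exp$ turns the exponential into Hadamard multiplication by $\virt_\signal = \boldsymbol{\exp}_{\Ne \times \Ny}(-\Mo(\Ad\signal)^\intercal)$; and left multiplication by $\So$ is again linear. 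Composing the three factors yields \eqref{eq:DF}. For $\Hd$ I split $\Hd(\signal) = \boldsymbol{\log}_{\Ny \times \Nd}(\Fd(\signal)) - \boldsymbol{\log}_{\Ny \times \Nd}(\Fd(0))$ so that the constant term drops out on differentiation, and the pointwise chain rule \eqref{eq:calculus2} with $\phi = \log$ contributes the Hadamard factor $\Fd(\signal)^{-T}$; combining this with \eqref{eq:DF} gives \eqref{eq:DH}.

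The adjoints are where the real bookkeeping lives. I record once the adjoint of each elementary block appearing in \eqref{eq:DF} and \eqref{eq:DH}: left multiplication by a fixed matrix $\Mo$ (resp.\ $\So$, $\Ad$) has adjoint left multiplication by $\Mo^\intercal$ (resp.\ $\So^\intercal$, $\Ad^\intercal$); Hadamard multiplication by a fixed matrix (here $\virt_\signal$ or $\Fd(\signal)^{-T}$) is self-adjoint, being diagonal in the standard basis; and transposition is its own adjoint. Starting from $\inner{\Fd'[\signal](\signalaux), \dataaux} = \inner{\Fd'[\signal](\signalaux)^\intercal, \dataaux^\intercal}$ and substituting \eqref{eq:DF}, I peel these blocks off one at a time --- first $\So$, then the self-adjoint $\virt_\signal$, then $\Mo$, then the transpose, then $\Ad$ --- until $\signalaux$ stands alone in the first slot; what remains in the second slot is, by definition, $\Fd'[\signal]^*(\dataaux)$, which is exactly \eqref{eq:DF-T}. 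The computation for $\Hd'$ is identical, except that the self-adjoint factor $\Fd(\signal)^{-T}$ in \eqref{eq:DH} is moved onto $\dataaux^\intercal$ at the very first step, so the same peeling argument produces \eqref{eq:DH-T} with $\dataaux^\intercal$ replaced by $\Fd(\signal)^{-T} \odot \dataaux^\intercal$. Finally, substituting $\dataaux = \Hd(\signal) - \data_{\Hd}$ into \eqref{eq:DH-T} gives \eqref{eq:nablaH}.

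The only genuine obstacle is the transpose bookkeeping: because $\Ad$ acts on the columns while $\Mo$ and $\So$ act on the rows of $\signal$, the factors sit on opposite sides of a transpose, and I must ensure that each $(\cdot)^\intercal$ is matched by the correct adjoint direction and that the pointwise factors $\virt_\signal$ and $\Fd(\signal)^{-T}$ are carried through unchanged. These steps are routine once the elementary adjoint rules above are fixed, but they are error prone, so I would confirm the final formulas by a dimension check and, if desired, a small finite-difference test against $\Fd$ and $\Hd$.
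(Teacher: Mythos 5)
Your proposal is correct and follows essentially the same route as the paper's proof: chain/product rules from Remark~\ref{rem:calculus} for \eqref{eq:DF} and \eqref{eq:DH}, transposition of elementary blocks inside the Frobenius inner product for the adjoints, and substitution of the residual into \eqref{eq:DH-T} for the gradient. The only cosmetic difference is that you differentiate $\boldsymbol{\log}(\Fd(\signal)) - \boldsymbol{\log}(\Fd(0))$ so the constant term drops out, whereas the paper differentiates the quotient $\Fd(\signal)^\intercal \oslash \Fd(0)^\intercal$ directly and lets the $\Fd(0)$ factors cancel; both yield the same Hadamard factor $\Fd(\signal)^{-T}$.
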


\begin{proof}
By  \eqref{eq:fwd} and \eqref{eq:calculus2}  we have 
\begin{align*}
\Fd'[\signal]( \signalaux )^\intercal
 &=    \So \cdot ( \boldsymbol{\exp}_{\Ne \times \Ny}')[- \Mo (\Ad \signal)^\intercal](-\Mo (\Ad \signalaux)^\intercal ) 
 \\ 
 &=
- \So \cdot 
 ( \boldsymbol{\exp}_{\Ne \times \Ny}(- \Mo (\Ad \signal)^\intercal) \odot 
 (\Mo (\Ad \signalaux)^\intercal ) 
 )
 \\
 &=
 -
 \So \cdot 
 \bigl( \virt_\signal \odot 
 (-\Mo (\Ad \signalaux)^\intercal ) 
 \bigr)
  \,.
\end{align*}  
This  is \eqref{eq:DF}. Now, using  the   \eqref{eq:fwd-log},  \eqref{eq:calculus1}, \eqref{eq:calculus2}  we have 
\begin{align*}
\Ho'[\signal]( \signalaux )^\intercal 
 &= (\boldsymbol{\log}'_{B \times L}) [  \Fd(\signal)^\intercal  \oslash \Fd(0)^{T} ] \bigl(  \Fd'[\signal]( \signalaux )^\intercal \oslash \Fd(0)^{T} \bigr) 
\\
 &= 
    (  \Fd'[\signal]( \signalaux )^\intercal  \oslash \Fd(0)^{T} ) \oslash (  \Fd(\signal)^\intercal \oslash \Fd(0)^\intercal)
 \\ 
 &=
  \Fd'[\signal]( \signalaux )^\intercal  \oslash \Fd(\signal)^{T} 
 \\
 &=
 - \Fd(\signal)^{-T} \odot 
 \bigl(  
 \So \cdot 
 \bigl( \virt_\signal \odot 
 (\Mo (\Ad \signalaux)^\intercal ) 
 \bigr)
 \bigr)
  \,.
\end{align*}   
This  is \eqref{eq:DH}.  Next we turn over to the computation of the adjoints.  
We  this done only for   \eqref{eq:DH-T}  and    \eqref{eq:DF-T}  is verified  in a similar manner.   By elementary manipulations
\begin{align*}
 \inner{\Ho'[\signal]( \signalaux ) , \dataaux}
& = 
 \minner{ \Fd(\signal)^{-T} \odot 
 ( \So \cdot ( \virt_\signal \odot  (-\Mo \cdot  (\Ad \signalaux)^\intercal ) ) )
 ,  \dataaux^\intercal}
 \\
 &=
\inner{  
 (\Ad \signalaux)^\intercal 
 , - \Mo^\intercal (\virt_\signal \odot (\So^\intercal \cdot (\Fd(\signal)^{-T} \odot \dataaux^\intercal) ))}
 \\
 &=
\minner{  
 \Ad \signalaux 
 , -  (\Mo^\intercal  (\virt_\signal \odot (\So^\intercal \cdot (\Fd(\signal)^{-T} \odot \dataaux^\intercal) )))^\intercal }
 \\
 &=
\minner{  
 \signalaux 
  , -   \Ad^\intercal \cdot (\Mo^\intercal  \cdot (\virt_\signal \odot (\So^\intercal (\Fd(\signal)^{-T} \odot \dataaux^\intercal) ))  )^\intercal }  \,,
 \end{align*}
which   is \eqref{eq:DH-T}.  Finally,  with   $\nabla \lsq [\signal] = (\Ho'[\signal])^* (\Ho (\signal) -   \data_{\Hd} )$ and  \eqref{eq:DH-T} we obtain \eqref{eq:nablaH}.
\end{proof}

For CP-fast the derivative of $\Hd$ at zero plays a central role.

\begin{remark}[Derivative at zero] \label{rem:linearization}
Let us consider the derivative at the zero  image  $ \signal =0$.
In this case we have $\virt_0 := \boldsymbol{\exp}_{\Ne \times \Ny}(- \Mo (\Ad 0)^\intercal) =   \boldsymbol{1}_{\Ne \times \Ny}$   and therefore  $\Ho'[0] (\signalaux)^\intercal  = -     (\So \Mo  (\Ad \signalaux)^\intercal ) \oslash \Fd(0)^\intercal  $ and 
$\Ho'[0]^* (\eta)   = -    \Ad^\intercal   (\dataaux \oslash \Fd(0)) \cdot \So  \cdot  \Mo$.
Using that $\Fd(0)^\intercal = \So  \cdot  \boldsymbol{1}_{\Ne \times \Ny}$ 
we get   
 \begin{align} \label{eq:Lin0-B}
\Ho'[0] (\signalaux)^\intercal 
& =  - \Uo  (\Ad  \signalaux )^\intercal
\\ \label{eq:Lin0-ad-B}
\Ho'[0]^* (\dataaux) 
&  =  -   \Ad^\intercal \dataaux^\intercal  \Uo   
\\ \label{eq:U}
\Uo & \triangleq    (\So  \Mo) \oslash (\So  \cdot \boldsymbol{1}_{\Ne \times \Nc})     \,.
\end{align}
The derivative $\Ho'[0] (\signalaux)   =  -  \Ad  \signalaux \Uo^\intercal $ may also be seen as  the linearization of $\Ho$  around zero. It has been used previously in MSCT and can be   simply  derived by  first  order Taylor series approximation as we show   next.  In fact, with   $\Fd(0)_{\ell, b} =  \inner{ \So_b, \boldsymbol{1}_{\Ne \times 1}}$ we  et
 \begin{multline} \label{eq:taylor}
 \log  \left( \frac{  \inner{\So_b , \exp ( - \Mo (\Ad \signalaux)^\intercal_\ell ) }}{\inner{ \So_b, \boldsymbol{1}_{\Ne \times 1}}}  \right) 
\simeq 
 \log  \left( \frac{  \inner{ \So_b ,  (1 -  \Mo (\Ad \signalaux)^\intercal_\ell) }}{\inner{ \So_b, \boldsymbol{1}_{\Ne \times 1}} } \right)
\\ =
 \log  \left( 1- \frac{  \inner{ \So_b ,   \Mo(\Ad \signalaux)^\intercal_\ell  }}{\inner{ \So_b, \boldsymbol{1}_{\Ne \times 1}} } \right)
\simeq
 - \binner{  \frac{ \So_b}{ \inner{ \So_b, \boldsymbol{1}_{\Ne \times 1}}} ,    \Mo (\Ad \signalaux)^\intercal_\ell  } \,.
 \end{multline}
 The final expression   in \eqref{eq:taylor}  in matrix notation  is   \eqref{eq:Lin0-B}, \eqref{eq:U}.  
For  dual energy CT  (the case where $\Nc=\Nd=2$), the use  of the inverse of $\Uo$  has been  proposed in \cite{fessler2004method}. We emphasize that while we utilize the linearization $\Ho'[0] (\signalaux) = - \Ad \signalaux \Uo^\intercal$ as an auxiliary tool, we actually solve the full nonlinear problem. However, the linearized LSQ problem $\norm{\Ad \signalaux \Uo^\intercal - \data_\Ho }_2^2/2 \to \min_\signalaux$ is also of interest in its own. Theoretically proven convergent algorithms for such problems can be found in \cite{du2022convergence, rabanser2019analysis}.

  \end{remark}

The  derivatives  in Theorem \eqref{thm:gradients} have a clear composite structure that we will discuss next and exploit for our algorithms.

\begin{remark}[Composite structure of derivatives]
Consider $\signal  \in (\R^{\Nc})^{\Nx}$ as signal of size  $\Nx$ with  $\Nc$ channels (each channel  is a material) and  the data $\data_\Ho  \in (\R^{\Nd})^{\Ny}$ as signal of size  $\Ny$ with $\Nd$ channels (each channel  is an energy bin). 
Then we can write $\Hd (\signal) =  \boldsymbol{\Phi}(\Ad \signal) $ where the nonlinear function 
 \begin{equation}  \label{eq:phi}
  \boldsymbol{\Phi} (Z) =  \boldsymbol{\log}_{\Ny \times  \Nd} (   \boldsymbol{\exp}_{L\times E} ( -  Z \Mo^\intercal  ) \cdot \underline{\So}^\intercal   )
  \end{equation}
operates on the multichannel  sinogram $\Ad \signal$   along  the  (horizontal) channel dimension and $\underline \So = \So  \oslash ( \So  \cdot  \boldsymbol{1}_{\Ne \times \Ny}  )$ are the normalized effective spectra.  Application of the chain rule  and some  computations results in  
 \begin{align}  \label{eq:Dphi1}
 \Hd' [\signal](\signalaux) &=  \boldsymbol{\Phi}'[\Ad \signal] (\Ad \signalaux)
 \\ \label{eq:Dphi2}
 \boldsymbol{\Phi}'[Z](\zeta)
  &=   - (   \boldsymbol{\exp}_{L\times E} ( -  Z \Mo^\intercal  )  \odot 
 ( \zeta \cdot \Mo^\intercal ) ) \cdot  \So^\intercal ) \oslash  ( \boldsymbol{\exp}_{L\times E}( - Z \Mo^\intercal ) \So^\intercal) 
  \\ \label{eq:Dphi3}
 \boldsymbol{\Phi}'[Z]^*(\dataaux)
  &=  
   - (  \boldsymbol{\exp}_{L\times E} ( -  Z \Mo^\intercal  )  \odot  ((\dataaux  \oslash  ( \boldsymbol{\exp}_{L\times E}( - Z \Mo^\intercal ) \So^\intercal))   \cdot \So) ) \cdot \Mo \,.
\end{align}
from which we recover  \eqref{eq:DH}, \eqref{eq:DH-T}. Further, for the zero material sinogram $Z=0$ we get  $\boldsymbol{\Phi}'[0](\zeta)
  =   -    \zeta \Uo^\intercal $ and $\boldsymbol{\Phi}'[0]^*(\dataaux)
  =   -    \dataaux \Uo $ with $\Uo =    (\So \cdot \Mo) \oslash (\So \cdot \boldsymbol{1}_{\Ny \times \Nc})  $ as in Remark~\ref{rem:linearization}.
  Equations \eqref{eq:Dphi1}, \eqref{eq:Dphi2}, \eqref{eq:Dphi3}  factorise the derivative $\boldsymbol{\Phi}'[Z]$ and its adjoint into  two separate parts:   A high dimensional ill-posed but  linear  part $\Ad$ operating in the pixel dimension and  small size  well-posed but nonlinear  part operating in the channel dimension.  Our  algorithms will  target this  structure for fast and effective iterative updates.
 \end{remark}

\subsection{Gradient and Newton-type one-step algorithms}

It is helpful to start  with gradient based method for minimizing the LSQ functional \eqref{eq:lsq}. Our first method CP-full can be seen as a modified version of a hybrid between the standard gradient iteration (or Landweber's method) and the Gauss-Newton method, so we describe these methods. Our second  method CP-fast  involves a simplification based on linearization around zero.

Gradient based one-step algorithms for solving the MSCT problem $\Ho \signal = \data_\Ho$ using the LSQ functional   $\lsq (\signal) $ start with the optimality condition  $\nabla \lsq[ \signal] =  \Ho'[\signal]^*    
( \Hd (\signal) - \data_{\Hd} ) = 0$ and fixed point equations derived from it.   Applying a non-stationary positive-definite  preconditioner   $ \Qo_k$  and a step size  $\stepsize_k >0$ results in   
\begin{equation} \label{eq:grad-iter}
\signal^{k+1} = \signal^k - \stepsize_k \cdot \Qo_k  \Ho'[\signal^k]^*    
\bigl( \Hd (\signal^k) - \data_{\Hd} \bigr)  \,.
\end{equation}
Explicit expressions for  $\Ho$  and   $\Ho'[\signal^k]^*$ are given by \eqref{eq:fwd-log} and  \eqref{eq:DH-T}.
Particular choices for   the preconditioner and the step size   yield various iterative solution  methods including Landwebers iteration, the steepest descent method,  Gauss-Newton iteration, Newton-CG iterations,   or Quasi-Newton methods. To motivate our algorithm it is most  educational  to  discuss the  Landweber and the Gauss-Newton iteration.         

\begin{description}
\item[Landwebers method:]   
In the context of inverse problems, the standard gradient method with a constant step size $\stepsize$ is known as the (nonlinear) Landweber iteration $\signal^{k+1} = \signal^k - \stepsize \cdot  \Ho'[\signal^k]^*    
( \Hd (\signal^k) - \data_{\Hd} )$  which  is   \eqref{eq:grad-iter} for the case where $ \Qo_k$ is the identity and $\stepsize_k= \stepsize$.  
Landweber's iteration is stable, robust, and easy to implement. It is even applicable in ill-posed cases where, with an appropriate stopping criterion, it serves as a regularization method \cite{hanke1995convergence}. On the other hand, it is also known to be slow in the sense that many iterative steps are required. In our case, this is due to the ill-conditioning of the forward operator.

\item[Gauss-Newton method:]   
Several potential accelerations of Landweber's method exist, and preconditioning seems one of the most natural ones. In the context of nonlinear least squares, the Gauss-Newton method and its variants are well-established and effective. In this case one chooses the preconditioner $\Qo_k = (\Ho'[\signal^k]^\ast  \Ho'[\signal^k])^{-1}$  in \eqref{eq:grad-iter} which  results  in 
\begin{equation} \label{eq:gauss-newton}
\signal^{k+1} = \signal^k - \stepsize_k \cdot  (\Ho'[\signal^k]^\ast \Ho'[\signal^k])^{-1}   \Ho'[\signal^k]^*    
\bigl( \Hd (\signal^k) - \data_{\Hd} \bigr) \,.
\end{equation}
While significantly  reducing the required number  of iterations, the  Gauss-Newton iteration \eqref{eq:gauss-newton}, however, is numerically costly as it requires  inversion of the non-stationary normal operator $\Ho'[\signal^k]^\ast \Ho'[\signal^k]$ in each iterative update. Moreover, due to ill-conditioning, the inversion needs to be regularized \cite{hanke1997regularizing,rieder1999regularization,kaltenbacher2008iterative}.  The algorithms proposed in this paper use simplifications that do not need to be regularized and avoid the costly inversion of the normal operator.
\end{description}

\subsection{Proposed algorithms} \label{sec:alg}

Now we move on to the proposed  iterative algorithms for MSCT. We start by  with CP-full which is  gradient-based  algorithm  with channel  preconditioning. We then  derive CP-fast   which is  derivative-free iterative algorithm using  a stationary adjoint problem.

\begin{description}
\item[CP-full:]
The first proposed algorithm is an instance of \eqref{eq:grad-iter}. Instead of no preconditioning as in Landweber's method or the costly preconditioning $\Qo_k = (\Ho'[\signal^k]^\ast \Ho'[\signal^k])^{-1}$ as in the Gauss-Newton method, we propose preconditioning with the channel mixing term  $\boldsymbol{\Phi}$ only. That is, we exploit the factorization $\Ho(\signal) = \boldsymbol{\Phi}(\Ad \signal)$ and propose  the choice   $ \Qo_k = ( \boldsymbol{\Phi}'[\Ad \signal^k]^\ast    \boldsymbol{\Phi}'[\Ad \signal^k] )^{-1}$  for the preconditioner. This result in the CP-full iteration  
\begin{equation} \label{eq:cp-full}
	\signal^{k+1} = 
	\signal^k - \stepsize_k \cdot  \Ad^\intercal \cdot (  \boldsymbol{\Phi}'[\Ad \signal^k]^\ast   ( \boldsymbol{\Phi}'[\Ad \signal^k] )^{-1} \boldsymbol{\Phi}'[\Ad \signal^k]^\ast 
(  \Hd  (\signal^k ) - \data_{\Hd} ) \,.
\end{equation}
While efficiently addressing the nonlinearity via a Gauss-Newton-type preconditioner in the channel dimension, it is computationally much less costly than the full Gauss-Newton update. 
Instead of inverting $\Ho'[\signal^k]^\ast \Ho'[\signal^k]$, which in matrix form has size $(\Nx \Nc) \times (\Nx \Nc)$ in the Gauss-Newton method, it requires inversion of the smaller $M \times M$ matrices $\boldsymbol{\Phi}'[\Ad \signal^k]^\ast \boldsymbol{\Phi}'[\Ad \signal^k]$ only, which can be done separately for each pixel in the projection domain. Assuming $\Nc,\Nd = \mathcal O(1)$ and $\Ny = \mathcal O(\Nx)$, this dramatically reduces the cost of preconditioning from $\mathcal O(\Nx^3)$ to $\mathcal O(\Nx)$ per iterative update.

\item
\item[CP-fast:]
In the derivative-free version, we go one step further and completely avoid the derivative $\Hd'[\signal^k]$. For that  purpose we replace  the derivative  $\boldsymbol{\Phi}'[\Ad \signal^k]$ in \eqref{eq:cp-full} by the derivative at zero. According to Remark~\ref{rem:linearization} we have  $\boldsymbol{\Phi}'[0](\zeta) =   -    \zeta \Uo^\intercal $   with $\Uo =    (\So \cdot \Mo) \oslash (\So \cdot \boldsymbol{1}_{\Ne \times \Nc}) $. Now, with $\Uo^\ddagger = (\Uo^\intercal \Uo)^{-1} \Uo^\intercal$ denoting the pseudoinverse of $\Uo$, we arrive at the iterative update
\begin{equation} \ \label{eq:cp-fast}
\signal^{k+1} = \signal^k - \stepsize_k \cdot  \Ad^\intercal \cdot    
(  \Hd  (\signal^k ) - \data_{\Hd} ) \cdot (\Uo^\ddagger)^\intercal 
\,.
\end{equation}
We refer to \eqref{eq:cp-fast} as derivative-free fast channel-preconditioned  (cp-fast) iteration. It only  involves the derivative at zero, which can be computed once before the actual iteration. In this sense, it is actually derivative-free and  fast.  
It can be interpreted as using the full nonlinear model for the forward problem, the linearization at zero for the adjoint problem, and including channel preconditioning.
\end{description}

Both iterations \eqref{eq:cp-full} and \eqref{eq:cp-fast} are of fixed-point type and we therefore expect convergence for sufficiently small step sizes.   Theoretically  proving  convergence seems possible  but is beyond the scope of this paper. As  \eqref{eq:cp-full}  is  of gradient  type it seems easier to derive  convergence for CP-full while for the derivative free  version CP-fast such a proof seems challenging. Note further that  for  the results presented below we integrated a positivity constraint by alternating  iterative updates with  the orthogonal projection onto the  cone of non-negative images.

\section{Numerical simulations}

We compared our algorithms CP-full and CP-fast to existing iterative one-step algorithms in MSCT.   Our evaluation builds on  \cite{Mory_2018}, which compares five such algorithms and provides open source code (\url{https://github.com/SimonRit/OneStepSpectralCT}) that is used for our results. We compare CP-full and CP-fast  with the best performing one of \cite{Mory_2018}, and further with a two-step method.

\begin{figure}
\includegraphics[width=0.48\textwidth]{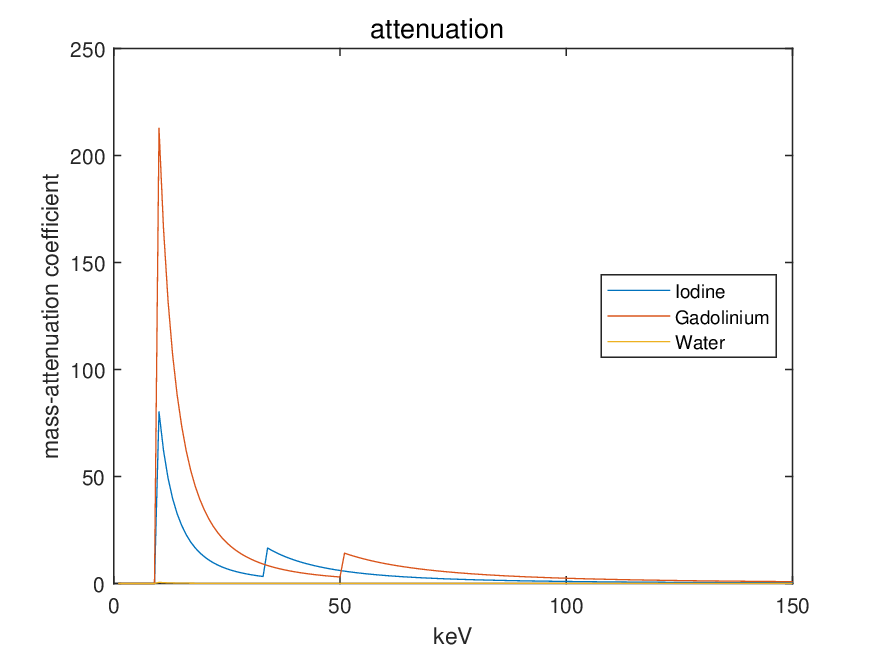}
\includegraphics[width=0.48\textwidth]{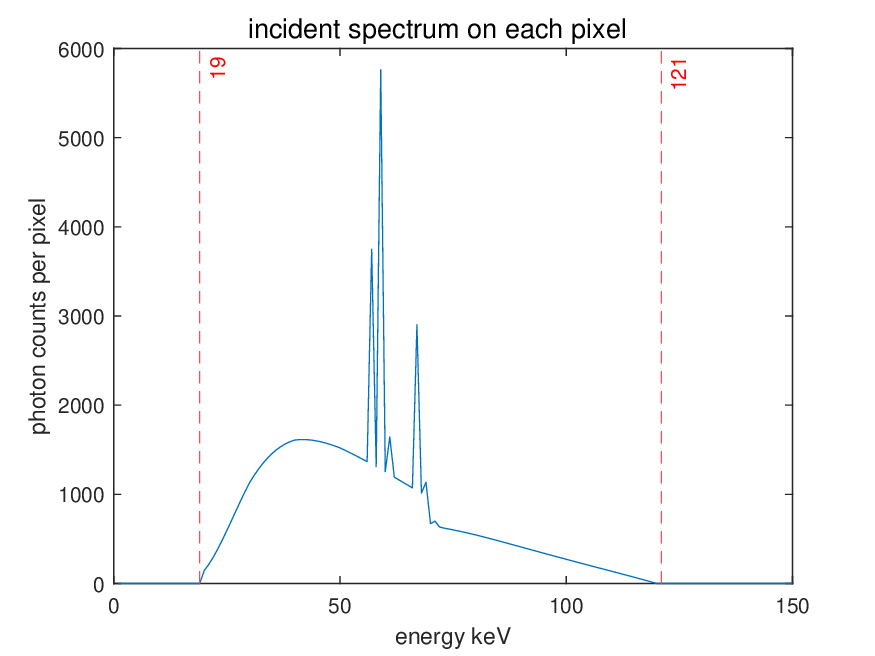} \\[0.2em]
\includegraphics[width=0.48\textwidth]{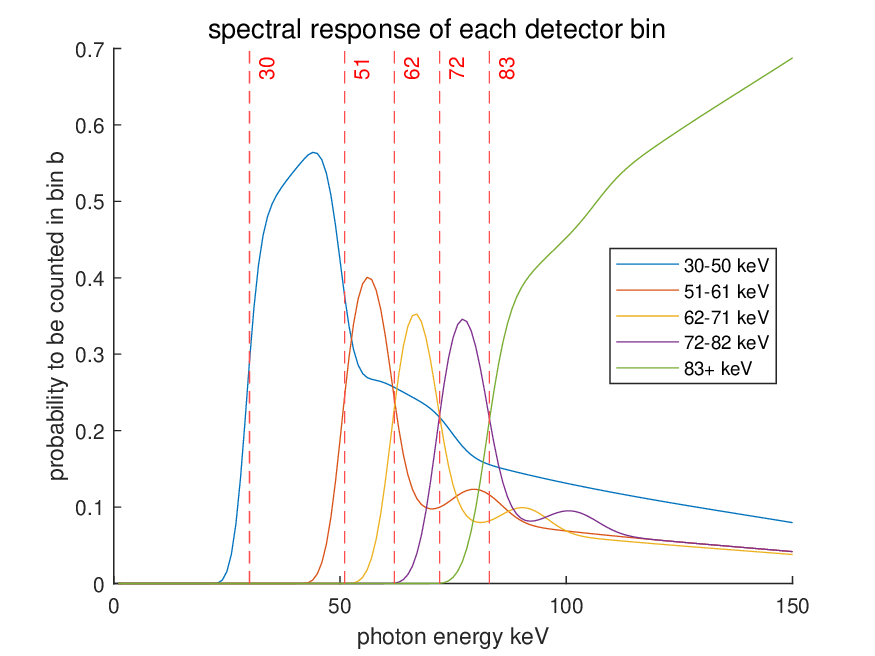}
\includegraphics[width=0.48\textwidth]{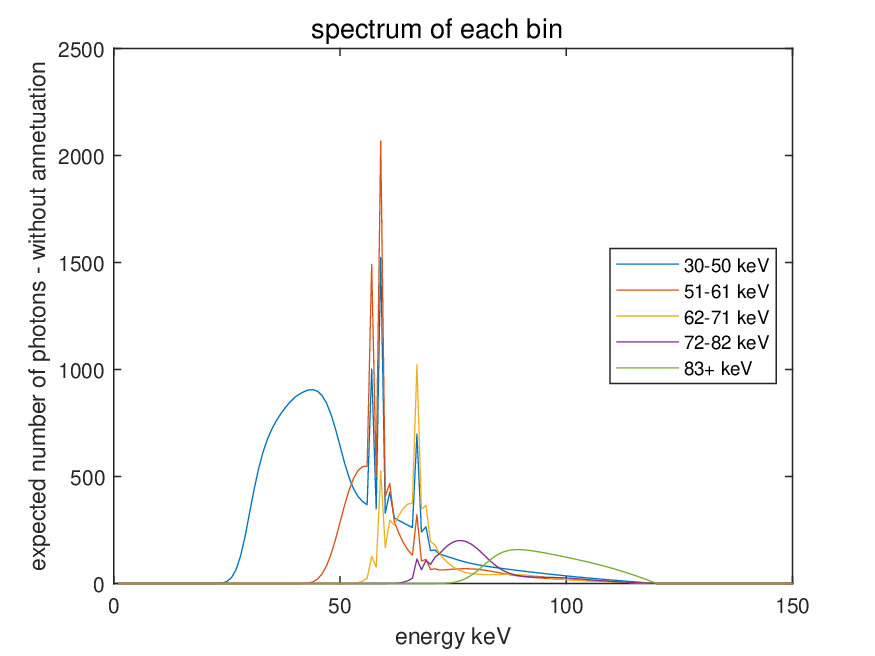}
\caption{Physical  parameters determining the  forward model. Top Left: Attenuation functions. Top right: Incident spectrum. Bottom left: Spectral response of the detectors. Bottom right: effective spectra.}
\label{fig:para}
\end{figure}

\subsection{Comparison methods}

The work \cite{Mory_2018} compares the following iterative one-step algorithms for MSCT in terms of memory usage and convergence speed to reach a fixed image quality threshold:
\begin{itemize}
\item \cite{cai2013full} derives a non-linear CG method for a weighted LSQ term.
\item \cite{long2014multi,weidinger2016polychromatic} derive surrogate approaches for Poisson maximum likelihood.
\item \cite{mechlem2017joint} extends \cite{weidinger2016polychromatic} by including Nesterov's momentum acceleration.
\item \cite{barber2016algorithm} generalizes the Chambolle–Pock algorithm \cite{chambolle2011first} to non-convex functionals.
\end{itemize}
Specifically, \cite{Mory_2018} found the algorithm of \cite{mechlem2017joint} (referred to as Mechlem2018) to be significantly faster than the other four methods and thus we use it for comparison.

In addition, we compare with the algorithm \cite{niu2014iterative} (referred to as Niu2014) as a prime example of an image domain two-step method. They use a penalised weighted least squares estimation technique applied to an empirically  linear model.  Note that more recently, data-driven methods based on neural networks and deep learning have also been proposed. Such methods are beyond the scope of this manuscript and we refer the interested reader to the  review articles \cite{bousse2023systematic,arridge2021overview}.

\begin{figure}[htb!]
\includegraphics[width=0.98\textwidth]{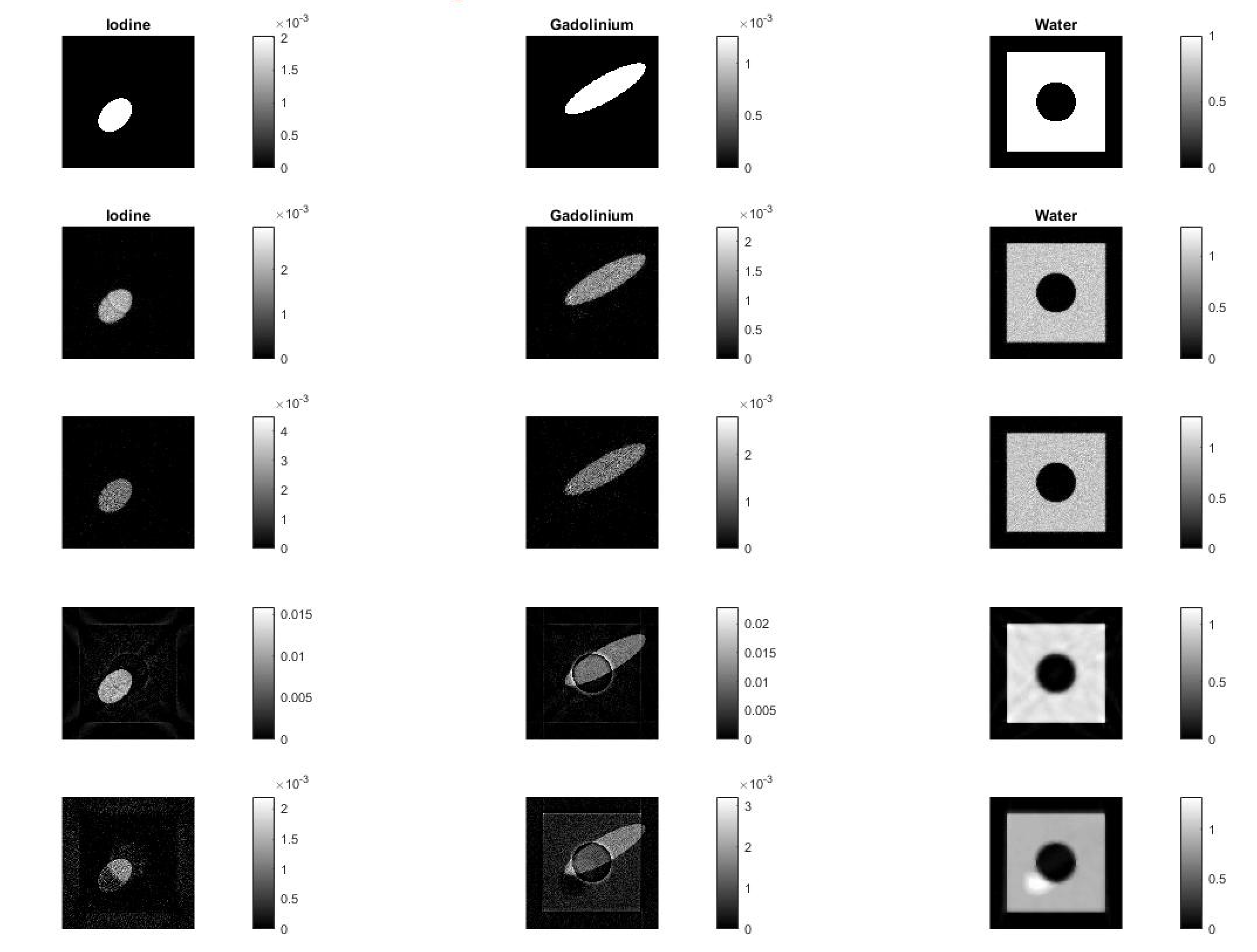}
\caption{Ground truth phantom (top row), and reconstructions using the CP-fast (second row), the CP-full (third row), Mechlem2018 (fourth row), and the two-step algorithm (bottom).}
\label{fig:recon}
\end{figure}

\subsection{Numerical implementation}

For the presented results we build on the Matlab code of \cite{Mory_2018}, which we extend with our algorithms. In particular, we work with $\Nc=3$ base materials (water, iodine, and gadolinium) and $\Nd=5$ energy bins. The energy variable is discretized using $\Ne = 150$ uniform nodes between 0 and 150 keV. The attenuation functions and energy spectra used are shown in Figure \ref{fig:para}.  We use $\Nx = 256 \times 256$ image pixels and $\Ny = 262450$ line integrals for the Radon transform. In particular the code \url{https://github.com/SimonRit/OneStepSpectralCT} creates matrices   
\begin{itemize}
\item $\Mo \in \R^{150 \times 3}$ for the base materials; 
\item  $\So \in \R^{5 \times 150} $ for the effective energy spectra; 
\item $\Ad \in \R^{L \times N^2}$ for the   Radon transform.      
\end{itemize}
After row normalizing  $\underline \So = \So  \oslash ( \So  \cdot  \boldsymbol{1}_{\Ne \times \Ny}  )$ we have  $\Ho (\signal) = \boldsymbol{\log} ( \boldsymbol{\exp}( - \Ad \cdot \signal \cdot \Mo^\intercal ) \cdot \underline{\So}^\intercal  )$ for the  MSCT forward model. Further, noisy data $\data$ are created with a different realistic forward model and Poisson noise added.

Besides  $\Mo$, $\So$,  $\Ad$,  $\Ho$ and $\data$ we require implementations of  $\Uo^\ddagger = (\Uo^* \Uo)^{-1} \Uo^*$  for CP-fast  and $\Uo_k^\ddagger = (\boldsymbol{\Phi}'[\Ad \signal^k]^\ast  \boldsymbol{\Phi}'[\Ad \signal^k] )^{-1} \boldsymbol{\Phi}'[\Ad \signal^k]^\ast$ for  CP-full.  Computing $\Uo^\ddagger$ is trivial and can be done in advance;  $\Uo_k^\ddagger$  are computed in each  step of CP-fast using \eqref{eq:Dphi2}, \eqref{eq:Dphi3}.

\subsection{Results}

Reconstruction results using the proposed algorithms CP-fast (top row) and CP-full (second row) and the two comparison methods Mechlem2018  (row three) and Niu2014 (bottom row) can be seen in Figure~\ref{fig:recon}. The phantom shown in the top row is made out of iodine (left), gadolinium (middle), and water (right).  Note that in all cases, we use noisy data and iterations with minimal $\ell^2$-reconstruction error $\norm{\signal_m^k - \signal_m^\star}_2/ \norm{\signal_m^\star}2$ where $\signal_m^\star$ is the ground truth. Note that the iterations are all performed on the same standard laptop, where one iteration of CP-full takes around 6 seconds, one iteration of CP-fast takes around one second  one iteration of Mechlem2018  about 4 seconds. Figure~\ref{fig:err} shows the evolution of the relative $\ell^2$-reconstruction error for various one-step methods. Note that for CP-fast, the minimum error in Iodine and Gadolinium is reached at approximately the same number of iterations, which shows efficient preconditioning and is important in application. Furthermore, note that we do not enforce that the sum over the three density images is one, but in the example, it indeed does not hold. The proposed algorithms turned out to be more stable than Mechlem2018 and produce better results. In particular, CP-full gives the best results while CP-fast is fastest.

\begin{figure}[htb!]
\includegraphics[width=0.98\textwidth]{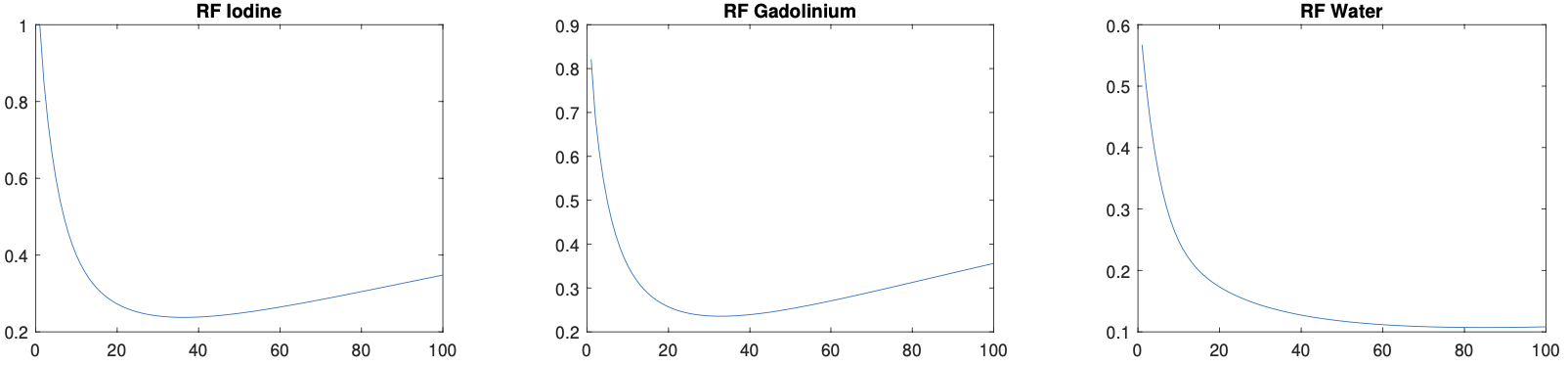} \\[0.2em]
\includegraphics[width=0.98\textwidth]{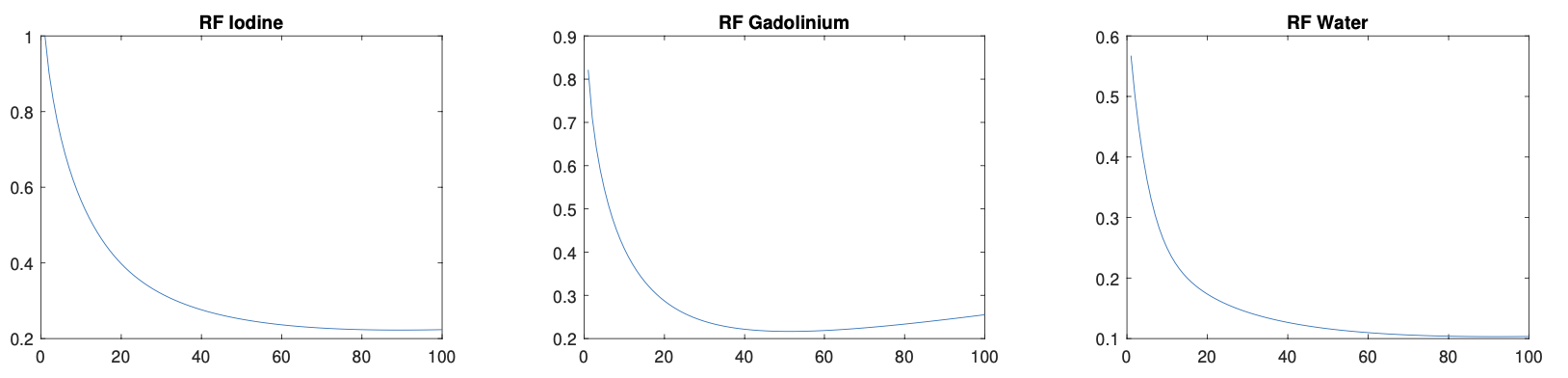} \\[0.2em]
\includegraphics[width=0.98\textwidth]{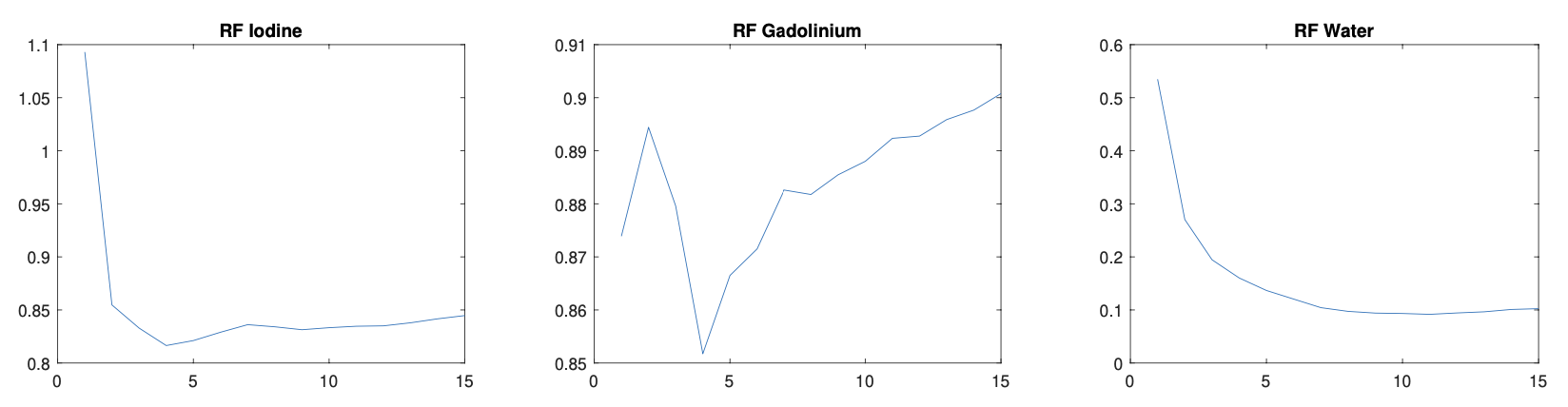} 
\caption{Relative reconstruction  error  using proposed CP-fast (top), proposed CP-full (middle)  and Mechlem2018 (bootom)  as a function of the  iteration index. }
\label{fig:err}
\end{figure}

\section{Conclusion and outlook}

Image reconstruction in MSCT requires the solution of a nonlinear ill-posed problem. Iterative one-step methods are known to be accurate for this purpose. In this work, we propose two generic algorithms named CP-full (channel-preconditioned full gradient iteration) and CP-fast (channel-preconditioned fast iteration). Both algorithms use preconditioning in the channel dimension only, which considerably accelerates the updates compared to full preconditioning used by Newton-type methods. CP-fast replaces the derivative in the channel non-linearity with linearization at zero, making it even more efficient. Both algorithms turn out to be fast and robust.

There are several future directions emerging from our work. First, proving the convergence of the two algorithms and demonstrating their regularization properties is important. Second, we will combine them with more realistic noise priors such as Poisson noise, resulting in the maximum likelihood estimation (MLE) functional. Additionally, we will integrate explicit image priors, use plug-and-play strategies, and incorporate learned components.

\end{document}